\newtheorem{theorem}{Theorem}[section]
\newtheorem{lemma}[theorem]{Lemma}
\theoremstyle{definition}
\theoremstyle{remark}
\newtheorem{remark}[theorem]{Remark}
\numberwithin{equation}{section}
\begin{document}

\title{Blow up of fractional Schr\"odinger equations on manifolds with nonnegative Ricci curvature}

\author{Huali Zhang}
\address{School of Mathematics and Statistics, Changsha University
of Science and Technology, Changsha 410114, Peoples Republic of China.}
\email{zhlmath@yahoo.com}

\author{Shiliang Zhao}
\address{School of Mathematical Sciences, Sichuan University, Chengdu 610064, Peoples Republic of China.}
\email{zhaoshiliang@scu.edu.cn}

\subjclass[2010]{Primary 35A01}

\date{\today}


\keywords{Fractional Schr\"odinger equations, blow-up, weight function, heat kernel.}

\begin{abstract}
In this paper, the well-posedness of Cauchy's problem of fractional Schr\"odinger equations with a power type nonlinearity on $n$-dimensional manifolds with nonnegative Ricci curvature is studied. Under suitable volume conditions, the local solution 
with initial data in $H^{[\frac{n}{2}]+1}$
will blow up in finite time no matter how small the initial data is, which follows from a new weight function and ODE inequalities. Moreover, the upper-bound of the lifespan can be estimated.
\end{abstract}

\maketitle




\section{\textbf{Introduction and Main results}}
Let $0<\alpha<2$ and in this short article we study the Cauchy's problem of nonlinear fractional Schr\"odinger equations
\begin{eqnarray}\label{NFSE}
\left
    \{
        \begin{array}{l}
                      \mathrm{i} \partial_t u- (-\Delta)^{\frac{\alpha}{2}} u=F(u,\overline{u}), \ (t,x) \in (0, + \infty) \times M \\
                       u|_{t=0}=\varphi_0(x)+\mathrm{i}\varphi_1(x),\\

        \end{array}
\right.
\end{eqnarray}
where $\varphi_0(x)$, $\varphi_1(x) \in C_c^\infty (M)$ are real valued functions and $M$ is a complete manifold with nonnegative Ricci curvature. $F$ is a nonlinear function of $u$ ,$\bar{u}$, satisfying
\begin{equation}\label{F}
  |F(u,\overline{u})|\lesssim |u|^p,
\end{equation}
where $\bar{u}$ is the conjugate of $u$ and $p>1.$ Note that when $\alpha=2$, \eqref{NFSE} is the nonlinear Schr\"odinger equation. When $1<\alpha<2$,
 \eqref{NFSE} was introduced by Laskin in \cite{Laskin}. Namely, the quantum mechanics path integral over Brownian trajectories leads to the
well known Schr\"odinger equation ($\alpha=2$), and the path integral over
L$\mathrm{\acute{e}}$vy trajectories leads to the fractional Schr\"odinger equation ($1<\alpha<2$).

First we recall some known results about the nonlinear Schr\"odinger equations on $\mathbb{R}^n$. The global existence results for small data can be established when $p$ is large enough. If $p>\frac{\sqrt{n^2+12n+4}+n+2}{2n}$, it was shown by  Strauss \cite{Strauss} that the global solutions exist for small initial data. For $F=|u|^{p-1}u$, the wave operators can be constructed in general for small data when $p>1+\frac{2}{n}$  \cite{GOV}, \cite{Nakanishi}. And hence the global existence is guaranteed by the conservation of mass and energy and the wave operators.  When $p$ is small, the structure of the nonlinearity starts to play a crucial role. For example, consider $F=|u|^{p-1}u$ and $1<p<1+\frac{2}{n}$. For $1\le n\le 3$, asymptotically free solutions can not exist \cite{Haya1,Ozawa}.  For $F=|u|^{p}$, Ikeda-Inui \cite{Ikeda} proved a small data blow-up result of $L^2$ when $1<p<1+\frac{4}{n}$. For other nonlinearities such as $u^2, \bar{u}^2$, few examples of global existence for small data below the Strauss exponent are known. We refer the readers to \cite{CW,GMS,GMS2,GH,HN,Kawahara,MS} and references therein. Furthermore, there are some Strichartz estimates for nonlinear Schr\"odinger equations on manifolds, for instances \cite{AP,BD,BHL,Banica,Banica1,Blair,Bruq1,Bruq2,Bruq3,Fontaine1,Pierfelice,Ionescu1,Ionescu2}.

As for nonlinear fractional Schr\"odinger equations, the Strichartz estimates was established by Cho-Koh-Seo \cite{Cho} in the radial case. For $F=|u|^{p-1}u$,
Guo-Sire-Wang-Zhao \cite{GSWZ} showed the global
well-posedness of radial solutions in the energy critical case. Boulenger-Himmelsbach-Lenzmann \cite{BHL} derived a blow-up result with radial data for
\eqref{NFSE} in both $L^2$-supercritical and $L^2$-critical cases respectively, and Guo-Zhu \cite{GqZ} further completed the blow-up result with radial data for general dimensions and nonlinearities.
For cubic nonlinearity $|u|^{2}u$, Guo-Han-Xin \cite{GHX}
showed that the period boundary value problem of \eqref{NFSE} is globally well-posed. Ionescu-Pusateri \cite{IP} proved that the global small, smooth solutions of \eqref{NFSE} exists if $\alpha=\frac{1}{2}$. Guo-Huo \cite{GH} established the global well-posedness of \eqref{NFSE} if $1<\alpha<2$, where the key tri-linear estimates in Bourgain space played a important role.  Concerning to blow-up of solutions for fractional Schr\"odinger equations on $\mathbb{R}^n$, there are some interesting results. For $\alpha=1$, Fujiwara in \cite{Fujiwara} proved that there is no global weak solution; Fino-Dannawi-Kirane in \cite{FDK1,FDK2} considered the blow up of mild solutions for $0<\alpha<2$. Motivated by these work, we are interested in blow up of strong solutions of general fractional Schr\"odinger equations on Riemannian manifolds.

In this paper, we get the blow-up results for a class of Schr\"odinger equations on Riemannian manifolds with nonnegative Ricci curvature. To be precise, let $(M, g)$ be a complete manifold of dimension $n$ with nonnegative Ricci curvature. Denote by $d$ the geodesic distance and $\mu$ the Riemannian measure. By the Bishop-Gromov inequality, $M$ satisfies the doubling condition: there exists constant $C>0$ such that
$$V(x, 2r) \le C V(x,r), \hspace{1cm} \forall r>0, x\in M, $$
where $V(x,r)$ is the volume of the geodesic ball centered at $x$ with radius $r$. In this paper, we will use the Einstein's summation convention. Then in local coordinates, the Laplace-Beltrami operator can be expressed as
\begin{equation}\label{laplace-beltami}
  \Delta =  \frac{1}{\sqrt{ \det g}} \frac{\partial}{\partial x_j} \left( \sqrt{\det g} g^{jk} \frac{\partial}{\partial x_k} \right)
\end{equation}
where $(g^{jk})_{1\le j, k\le n}$ is the inverse matrix of $(g_{jk})_{1\le j, k \le n}$. For any $p\in M$, consider the normal coordinates in which the Riemannian metric can be written as
$$ g=dr^2+ r^2 g_{\alpha \beta}(r, \theta) d\theta^\alpha d\theta^\beta. $$
Thus for any distance function $f(r)=f(d(x,p))$, we have by\eqref{laplace-beltami}
\begin{equation}\label{radial function}
  \Delta f(r)= f''(r)+ \frac{n-1}{r} f'(r) + \frac{1}{\sqrt{G}} \frac{\partial \sqrt{G}}{\partial r} f'(r)
\end{equation}
where $G=\det(g_{\alpha,\beta})$.
Denote by $p_t(x,y)$ the Schwartz kernel of the heat semigroup $e^{t\Delta}$. According to \cite{LY}, the heat kernel $p_t(x,y)$ satisfies the Gaussian upper bound:
\begin{equation}\label{heat kernel}
  p_t(x,y) \le \frac{C}{V(x, \sqrt{t})} \exp \left( -c\frac{d^2(x,y)}{t} \right), \hspace{1cm} \forall t>0, x,y \in M.
\end{equation}
Then, by \cite{Grigoryan}, the following estimates also hold by \eqref{heat kernel} and doubling properties
\begin{equation}\label{heat kernel 2}
 |\Delta_y p_t(x,y)| \le \frac{C}{V(x, \sqrt{t})} \frac{1}{t} \exp \left( -c\frac{d^2(x,y)}{t} \right), \hspace{1cm} \forall t>0, x,y \in M.
\end{equation}
Moreover, by \cite{LY}(page 163 Theorem 1.3(i)), the following gradient estimates also hold under our assumption,
\begin{equation}\label{Assumpation3}
 |\nabla_y p_t(x,y) | \le \frac{C}{V(x, \sqrt{t})} \frac{1}{\sqrt{t}} \exp \left( -c\frac{d^2(x,y)}{t} \right), \hspace{1cm} \forall t>0, x,y \in M.
\end{equation}

The main results of this paper is as follows:
\begin{theorem}
Let $0<\alpha< 2, \ n\ge 2$ and $s=[\frac{n}{2}]+1$. Consider \eqref{NFSE} on smooth complete $n-$dimensional manifolds $(M,g)$ with nonnegative Ricci curvature
\begin{eqnarray*}
		\left
		\{
		\begin{array}{l}
			 \mathrm{i} \partial_t u- (-\Delta)^{\frac{\alpha}{2}} u=F(u,\overline{u}),
			\\
			u(0,x)=\varphi_0(x)+\mathrm{i}\varphi_1(x),
		\end{array}
		\right.
	\end{eqnarray*}
where $\ \varphi_0(x),\ \varphi_1(x) \in$
$H^s(M)$. Assume that the following conditions holds
\begin{equation}\label{Assumpation1}
 (1) \left| \frac{r}{\sqrt{G}} \frac{\partial \sqrt{G}}{\partial r} \right| \le C \hspace{1cm} \forall r>0.
\end{equation}
\begin{equation}\label{Assumpation2}
  (2) V(x,r)\sim r^n \hspace{1cm} \forall x\in M ,~ r>0.
\end{equation}
If $\mathbf{Im} F=|u|^p, 1<p<1+ \frac{\alpha}{n}$ and $ \int_{M}\varphi_0(x)d\mu >0,$
then the local solution of \eqref{NFSE} will blow up in a finite time no matter how small the initial data is.
\end{theorem}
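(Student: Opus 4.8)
The plan is a weighted test-function argument reducing the problem to a Riccati-type ODE inequality. Write $u=u_1+\mathrm{i}u_2$ with $u_1,u_2$ real. Taking the imaginary part of \eqref{NFSE}, using that $(-\Delta)^{\alpha/2}$ is self-adjoint, that $\mathbf{Im}\,F=|u|^p$, and that the weight is real, one gets, for any nonnegative real $\phi$ lying in the domain of $(-\Delta)^{\alpha/2}$,
\begin{equation}\label{eq:scalar}
\frac{d}{dt}\int_M u_1(t)\,\phi\,d\mu=\int_M u_2(t)\,(-\Delta)^{\frac{\alpha}{2}}\phi\,d\mu+\int_M |u(t)|^p\,\phi\,d\mu ,
\end{equation}
the pairing $\langle(-\Delta)^{\alpha/2}u,\phi\rangle=\langle u,(-\Delta)^{\alpha/2}\phi\rangle$ being legitimate since $u(t)\in H^s$ with $s>\alpha$ (recall $n\ge 2$) and $\phi$ will be smooth with polynomial decay. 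With $y(t):=\int_M u_1(t)\phi\,d\mu$ and $N(t):=\int_M|u(t)|^p\phi\,d\mu$, Hölder's inequality gives $y\le N^{1/p}\big(\int_M\phi\big)^{1/p'}$, i.e. $N\ge y^{p}\big(\int_M\phi\big)^{1-p}$, so \eqref{eq:scalar} becomes a closed inequality for $y$ \emph{provided} the first term on the right can be absorbed by $N$.

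The heart of the proof is the construction of the weight $\phi=\phi_R$, $R\ge1$ --- the ``new weight function''. Fix $x_0\in M$ and take a fixed profile of geodesic distance, e.g. $\phi_R(x)=\big(1+d(x,x_0)^2/R^2\big)^{-(n+\alpha)/2}$. Since $\phi_R$ is a smooth (away from the cut locus) decreasing function of $d(\cdot,x_0)$ with a flat critical point at $x_0$, the formula \eqref{radial function} for the Laplacian of a distance function and assumption \eqref{Assumpation1} on $\sqrt{G}$ give the Euclidean-type bound $\|\Delta\phi_R\|_{L^\infty(M)}\lesssim R^{-2}$. For the nonlocal operator I would use the Bochner subordination identity (valid because $0<\alpha<2$)
\begin{equation}\label{eq:subord}
(-\Delta)^{\frac{\alpha}{2}}\phi_R(x)=c_\alpha\int_0^\infty\big(\phi_R(x)-e^{t\Delta}\phi_R(x)\big)\,\frac{dt}{t^{1+\alpha/2}},\qquad c_\alpha>0,
\end{equation}
splitting at the natural scale $t\sim R^2$: for $t\lesssim R^2$ write $\phi_R-e^{t\Delta}\phi_R=-\int_0^t e^{s\Delta}\Delta\phi_R\,ds$ and use $\|\Delta\phi_R\|_\infty\lesssim R^{-2}$ with $L^\infty$-contractivity of $e^{s\Delta}$; for $t\gtrsim R^2$ use the Gaussian heat-kernel bounds \eqref{heat kernel}, \eqref{heat kernel 2}, \eqref{Assumpation3} together with the volume growth \eqref{Assumpation2}. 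The outcome is the pointwise/weighted control
\begin{equation}\label{eq:key}
\big|(-\Delta)^{\frac{\alpha}{2}}\phi_R(x)\big|\lesssim R^{-\alpha}\phi_R(x),\qquad \int_M\phi_R\,d\mu\lesssim R^n,\qquad \int_M\big|(-\Delta)^{\frac{\alpha}{2}}\phi_R\big|^{p'}\phi_R^{\,1-p'}\,d\mu\lesssim R^{\,n-\alpha p'},
\end{equation}
with $p'=p/(p-1)$; moreover $\phi_R$ is comparable to $1$ on any fixed ball once $R$ is large.

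With \eqref{eq:key} the error term in \eqref{eq:scalar} is controlled: $|u_2|\le|u|$ and Hölder give $\big|\int_M u_2(-\Delta)^{\alpha/2}\phi_R\,d\mu\big|\le C R^{(n-\alpha p')/p'}N_R^{1/p}$, and since $1<p<1+\frac{\alpha}{n}$ forces $\alpha p'>n+\alpha>n$ the prefactor $R^{(n-\alpha p')/p'}\to0$ as $R\to\infty$. Combining with $N_R\gtrsim y_R^{\,p}R^{-n(p-1)}$, \eqref{eq:scalar} yields
\begin{equation}\label{eq:ODE}
y_R'(t)\ \ge\ N_R(t)-C\,R^{(n-\alpha p')/p'}\,N_R(t)^{1/p},
\end{equation}
so as long as $y_R(t)\ge y_R^{*}:=c\,R^{\,n-\alpha/(p-1)}$ (and $n-\frac{\alpha}{p-1}<0$ is exactly the hypothesis $p<1+\frac{\alpha}{n}$, so $y_R^{*}\to0$) the right side of \eqref{eq:ODE} is $\ge\tfrac12 N_R\gtrsim R^{-n(p-1)}y_R^{\,p}$. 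Since $\varphi_0\in C_c^\infty$ and $\phi_R\to1$ locally uniformly, $y_R(0)=\int_M\varphi_0\phi_R\,d\mu\to\int_M\varphi_0\,d\mu>0$; hence for $R$ large enough ($R=R_0$) one has $y_{R_0}(0)>y_{R_0}^{*}$, \eqref{eq:ODE} makes $y_{R_0}$ increasing so the bound $y_{R_0}\ge y_{R_0}^{*}$ self-propagates on the whole interval of existence, and we are reduced to $y_{R_0}'\ge\kappa y_{R_0}^{\,p}$ with $\kappa\sim R_0^{-n(p-1)}$, $y_{R_0}(0)>0$, $p>1$. This forces $y_{R_0}$, hence $u$, to cease to exist by time $T^{*}\lesssim R_0^{\,n(p-1)}\big(\int_M\varphi_0\,d\mu\big)^{-(p-1)}<\infty$, giving both the finite-time blow-up and the lifespan bound; the mechanism is independent of the size of the data because $y_R^{*}\to0$ keeps the bootstrap available however small $\int_M\varphi_0\,d\mu$ is (one just takes $R_0$ larger).

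The main obstacle is \eqref{eq:key}: obtaining sharp pointwise and weighted $L^{p'}$ estimates for $(-\Delta)^{\alpha/2}\phi_R$ on a general manifold, where there is no Fourier transform, and controlling the nonlocal tails so that $\int_M|(-\Delta)^{\alpha/2}\phi_R|^{p'}\phi_R^{1-p'}\,d\mu$ is simultaneously finite and of the sharp order $R^{n-\alpha p'}$. This is precisely where the structural hypotheses \eqref{Assumpation1}--\eqref{Assumpation2} and the heat-kernel estimates \eqref{heat kernel}--\eqref{Assumpation3} enter, and choosing the decay exponent of the profile (here $n+\alpha$) and its flatness at $x_0$ so that the bounds in \eqref{eq:key} hold together is the delicate point; everything downstream (the Hölder estimates and the ODE bootstrap) is then routine.
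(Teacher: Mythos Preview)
Your approach is essentially the paper's. Your weight $\phi_R(x)=(1+d(x,x_0)^2/R^2)^{-(n+\alpha)/2}$ is exactly the paper's $h(R^\alpha,x)$, and your key pointwise estimate $|(-\Delta)^{\alpha/2}\phi_R|\lesssim R^{-\alpha}\phi_R$ is precisely Lemma~2.3 (with $t=R^\alpha$). The downstream H\"older/Riccati argument is then the same, up to one cosmetic difference: the paper lets the scale evolve with time via $T=t+N$, which produces an extra $\partial_t h$ error term (bounded by $T^{-1}h$ and absorbed just as you absorb the $(-\Delta)^{\alpha/2}$ error), whereas you freeze $R$ and run a cleaner bootstrap $y_R'\ge \kappa y_R^{\,p}$. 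Both routes land on the same ODE blow-up and the same lifespan bound.

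The one place where your sketch falls short of the paper is the proof of the key estimate itself. Your subordination argument, as written, only yields the uniform bound $|(-\Delta)^{\alpha/2}\phi_R|\lesssim R^{-\alpha}$: the small-$t$ piece uses $\|\Delta\phi_R\|_{L^\infty}\lesssim R^{-2}$ and $L^\infty$-contractivity of $e^{s\Delta}$, which forgets all spatial decay. That is not enough for the weighted $L^{p'}$ bound in your \eqref{eq:key}, since $\int_M\phi_R^{\,1-p'}d\mu=\infty$. You need the full pointwise decay $|(-\Delta)^{\alpha/2}\phi_R(x)|\lesssim R^n\,d(x,x_0)^{-(n+\alpha)}$ for $d(x,x_0)\gg R$, and this is where the real work lies. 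The paper obtains it not via subordination but via the resolvent representation $(-\Delta)^{\alpha/2}h=c_\alpha\int_0^\infty s^{\alpha/2-1}(-\Delta)(sI-\Delta)^{-1}h\,ds$, reduces to an integral against a kernel $K(x,y)\sim d(x,y)^{2-n-\alpha}$ (derived from the heat-kernel Gaussian bounds \eqref{heat kernel}--\eqref{Assumpation3} and \eqref{Assumpation2}), and then performs a careful case analysis on $d(x,p)$ versus $t^{1/\alpha}$ with a partition of unity in $d(x,y)$ and integration by parts to push derivatives onto $K$. A subordination proof is possible, but you would need to track how $e^{s\Delta}$ propagates the pointwise decay of $\Delta\phi_R$ (not just its sup norm), which amounts to the same region-by-region analysis; you correctly flag this as the main obstacle, but the indicated split at $t\sim R^2$ alone does not close it.
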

The proof of Theorem 1.1 crucially relies on constructing a new weight function and ODE inequalities. Different from Euclidian spaces, there is non-implicit formula for fractional Laplacian operator or heat kernel on general Riemannian manifolds. Hence, this work is not a trival extension from Euclidian spaces to manifolds with nonnegative Ricci curvature. By using Li-Yau's result \cite{LY}, namely, some upper bounds of $|\nabla^k p_t|_{C^0}, k=0,1,2$ ($p_t$ is the heat kernel), we could prove that local solutions with initial data in spaces $H^{[\frac{n}{2}]+1}(M)$ will blow up, where some ODE type estimates plays crucial role.
\begin{remark}\label{r}
For all $p>1$, it's easy to see that the local solution of  \eqref{NFSE} exists in $H^{s}(M)$ ($s=[\frac{n}{2}]+1$) by bootstrap argument and contraction mapping principle. Applying $\nabla^j$ ($j$ is a multi integer index) to (1.1) yields
\begin{equation*}
\partial_t \nabla^j u- (-\Delta)^{\frac{\alpha}{2}}\nabla^j u= \nabla^j F(u,\bar{u}).
\end{equation*}
Taking inner product with $\nabla^j{\bar{u}}$ on (1.1) and taking the imaginary part, and then summing it for $|j|\leq s$, we have
\begin{equation*}
\begin{split}
||u(t,\cdot)||^2_{H^s} &\lesssim ||u_0||^2_{H^s}+ \int^t_0 ||u(\tau)||^{2}_{H^s} ||u||^{p-1}_{L^\infty}d\tau
\\
& \lesssim ||u_0||^2_{H^s}+ \int^t_0 ||u(\tau)||^{p+1}_{H^s}d\tau,
\end{split}
\end{equation*}
where in the last inequality we use that $H^s \hookrightarrow L^\infty$ for $s=[\frac{n}{2}]+1>\frac{n}{2}$. For sufficiently small $T>0$, we could obtain $u(t,x) \in C([0,T];H^s(M))$. The uniqueness of solution follows from the similar energy estimates.
\end{remark}
The paper  is organized as follows: In Section 2, we prove some lemmas which play an important role in our proof; Our main results will be shown in Section 3; In Section 4, we give an example which satisfies our assumptions on the Riemannian metric.   Now we introduce some notations. If $f, g$ are two functions, we say $f\lesssim g$ if and only if there exists a constant $c>0$ such that $f\le c g$. We say $f\sim g$ if and only if there exits a constant $C>0$ such that $C^{-1}g\le f \le C g $. Set $f\wedge g= \min\{ f, g \} $.  The constant $C,c$ may change from line to line.

\section{\textbf{Preliminary Results}}
In this section, we will prove several lemmas which will be frequently used.
\begin{lemma}
Let $y>0$ and we have
$$ 1\wedge y \sim \frac{1}{1+ y^{-1} }. $$
\end{lemma}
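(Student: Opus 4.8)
The plan is to recast the right-hand side in a form that is directly comparable to $1\wedge y$. Writing $\frac{1}{1+y^{-1}} = \frac{y}{1+y}$, I would establish the two-sided bound
\[
\frac{1}{2}\,(1\wedge y) \;\le\; \frac{y}{1+y} \;\le\; 1\wedge y ,
\]
which is precisely the asserted equivalence, with comparison constant $C=2$.

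First I would record the elementary inequality $\max\{1,y\}\le 1+y\le 2\max\{1,y\}$, valid for every $y>0$: the left inequality is immediate, and the right one holds because each of $1$ and $y$ is bounded by $\max\{1,y\}$. Second, I would use the identity $\frac{y}{\max\{1,y\}}=\min\{1,y\}=1\wedge y$, checked at once by distinguishing the case $y\ge 1$ (numerator equals denominator, both sides equal $1$) and the case $0<y<1$ (denominator equals $1$, both sides equal $y$). Combining the two and dividing the chain of inequalities through by the appropriate quantity gives $\frac{y}{2\max\{1,y\}}\le \frac{y}{1+y}\le \frac{y}{\max\{1,y\}}$, and substituting the identity yields the claim.

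There is essentially no obstacle here; the only point needing a little care is the bookkeeping at the transition $y=1$ and stating the identity $\frac{y}{\max\{1,y\}}=1\wedge y$ cleanly. A fully equivalent alternative would be to split into the cases $y\ge 1$ and $0<y<1$ from the outset and estimate $\frac{y}{1+y}$ by hand in each — in the first case $\frac{1}{2}\le \frac{y}{1+y}<1$ while $1\wedge y=1$, and in the second $\frac{y}{2}<\frac{y}{1+y}<y$ while $1\wedge y=y$ — which is just as short and perhaps more transparent.
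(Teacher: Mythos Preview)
Your proposal is correct and establishes precisely the same two-sided inequality $\frac{1}{2}(1\wedge y)\le \frac{1}{1+y^{-1}}\le 1\wedge y$ that the paper states; the paper simply asserts this inequality as ``direct to check'' without further detail, so your argument is a fleshed-out version of the same approach.
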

\begin{proof}
It is direct to check that
$$ \frac{1\wedge y}{2}\le \frac{1}{1+ y^{-1} } \le  1\wedge y. $$
\end{proof}

\begin{lemma}
Let $M$ be a $n-$dimensional complete Riemannian manifold with nonnegative Ricci curvature for $n\ge 2$.   Set $\alpha, \gamma>0$ and  the following hold :\\
(1)When $\gamma>\frac{n}{2}$, we have
$$ \int_M [t^{\frac{2}{\alpha}}+d^2(x,y)]^{-\gamma} d\mu(y) \lesssim t^{\frac{n-2\gamma}{\alpha}}, \hspace{1cm} \forall \alpha, t, R >0, x\in M.  $$
(2)When $0\le \gamma< n$, we have
$$ \int_{B(x,R)} d(x,y)^{-\gamma} d\mu(y) \lesssim R^{n-\gamma}, \hspace{1cm} \forall \alpha, t, R >0, x\in M.   $$
(3)When $\gamma>n$, we have
$$ \int_{M\backslash B(x,R)} d(x,y)^{-\gamma} d\mu(y) \lesssim R^{n-\gamma}, \hspace{1cm}  \forall \alpha, t, R >0, x\in M.   $$
\end{lemma}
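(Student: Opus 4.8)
The plan is to establish all three integral estimates by the standard dyadic (annular) decomposition of $M$ into geodesic shells, using only the volume growth hypothesis $V(x,r)\sim r^n$ (which in fact the lemma does not even state explicitly, but which follows from \eqref{Assumpation2} in the regime where it is applied; for the bare lemma one uses the doubling property plus the Bishop--Gromov upper bound, which gives $V(x,r)\lesssim r^n$, together with Ahlfors regularity from below where needed). Throughout, fix $x\in M$ and write $B_j=B(x,2^j)$ for $j\in\mathbb{Z}$, so that the annulus $A_j=B_{j+1}\setminus B_j$ has $d(x,y)\sim 2^j$ on it and $\mu(A_j)\le V(x,2^{j+1})\lesssim 2^{jn}$.

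\medskip

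\textbf{Proof of (2) and (3).} These are the easy cases and I would do them first as a warm-up. For (3), decompose $M\setminus B(x,R)=\bigcup_{j\ge 0}\big(B(x,2^{j+1}R)\setminus B(x,2^jR)\big)$; on the $j$-th shell $d(x,y)^{-\gamma}\sim (2^jR)^{-\gamma}$ and the shell has measure $\lesssim (2^jR)^n$, so the $j$-th contribution is $\lesssim (2^jR)^{n-\gamma}=R^{n-\gamma}2^{j(n-\gamma)}$, and since $\gamma>n$ the geometric series $\sum_{j\ge 0}2^{j(n-\gamma)}$ converges, giving $\lesssim R^{n-\gamma}$. For (2), decompose $B(x,R)=\bigcup_{j\ge 1}\big(B(x,2^{-j+1}R)\setminus B(x,2^{-j}R)\big)\cup\{x\}$; the $j$-th shell contributes $\lesssim (2^{-j}R)^{n-\gamma}$, and since $\gamma<n$ the series $\sum_{j\ge 1}2^{-j(n-\gamma)}$ converges, giving $\lesssim R^{n-\gamma}$ (the singleton $\{x\}$ is null, so the singularity at $y=x$ is harmless provided $\gamma<n$).

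\medskip

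\textbf{Proof of (1).} Write $\rho=t^{1/\alpha}$ so the integrand is $(\rho^2+d^2(x,y))^{-\gamma}$ and the claimed bound is $\lesssim \rho^{n-2\gamma}$. Split $M=B(x,\rho)\cup(M\setminus B(x,\rho))$. On $B(x,\rho)$ one has $\rho^2+d^2\ge\rho^2$, so the integrand is $\le\rho^{-2\gamma}$ and the integral over $B(x,\rho)$ is $\lesssim \rho^{-2\gamma}V(x,\rho)\lesssim \rho^{-2\gamma}\rho^n=\rho^{n-2\gamma}$. On $M\setminus B(x,\rho)$, decompose dyadically into shells $A_j=B(x,2^{j+1}\rho)\setminus B(x,2^j\rho)$, $j\ge 0$, where $\rho^2+d^2\sim (2^j\rho)^2$ and $\mu(A_j)\lesssim (2^j\rho)^n$; the $j$-th contribution is $\lesssim (2^j\rho)^{-2\gamma}(2^j\rho)^n=\rho^{n-2\gamma}2^{j(n-2\gamma)}$, and since $\gamma>n/2$ we have $n-2\gamma<0$ so the series sums to $\lesssim \rho^{n-2\gamma}=t^{(n-2\gamma)/\alpha}$. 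Adding the two pieces finishes the proof.

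\medskip

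\textbf{The main obstacle}, such as it is, is purely bookkeeping: one must be careful that the two-sided volume estimate $V(x,r)\sim r^n$ is genuinely what is being used (the upper bound $V(x,r)\lesssim r^n$ suffices for \emph{all three} parts, since in every case we only ever estimate $\mu$(shell) from above), and one must check that the summability of the geometric series is controlled by exactly the stated sign conditions on $\gamma$ — $\gamma>n/2$ in (1) to kill the tail, $\gamma<n$ in (2) to control the accumulation at $x$, $\gamma>n$ in (3) to kill the tail — and that the endpoint/boundary cases ($\gamma=0$ in (2), etc.) are trivial or excluded. Since none of these steps involves anything beyond the elementary decomposition and a convergent geometric series, I do not expect any real difficulty; the harmonic-analytic content is entirely in the volume growth bound, which we are granted. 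I would present it compactly, proving (1) in full detail and remarking that (2) and (3) follow from the same dyadic splitting.
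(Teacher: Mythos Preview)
Your proof is correct and follows essentially the same dyadic annular decomposition as the paper, with only a cosmetic difference in part (1): the paper decomposes $M$ into shells $B(x,2^j t^{1/\alpha})\setminus B(x,2^{j-1}t^{1/\alpha})$ over all $j\in\mathbb{Z}$ and sums the resulting two-sided geometric series, whereas you first bound the inner ball $B(x,\rho)$ trivially and decompose only the exterior. Your observation that only the Bishop--Gromov upper volume bound $V(x,r)\lesssim r^n$ is needed (not the full two-sided estimate) is accurate and matches what the paper actually uses.
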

\begin{proof}The proof relies on the ring decomposition and facts that the volume of geodesic balls grow polynomially. \\
(1) When $\gamma>\frac{n}{2}$, we have by the ring decomposition
\begin{align*}
  \int_M [t^{\frac{2}{\alpha}}+d^2(x,y)]^{-\gamma} d\mu(y) & =\sum_{j\in \mathbb{Z}} \int_{B(x, 2^j t^{\frac{1}{\alpha}})\backslash B(x, 2^{j-1} t^{\frac{1}{\alpha}}) } [t^{\frac{2}{\alpha}}+d^2(x,y)]^{-\gamma} d\mu(y)  \\
  & \lesssim  \sum_{j\in \mathbb{Z}} [ t^{\frac{2}{\alpha}}+ 2^{2j-2} t^{\frac{2}{\alpha}} ]^{-\gamma} 2^{jn} t^{\frac{n}{\alpha}} \\
   & \lesssim t^{\frac{n-2\gamma}{\alpha}} \sum_{j\in \mathbb{Z}} [ 1+2^{2j-2} ]^{-\gamma} 2^{jn}.
\end{align*}
The above series converges as long as $\gamma>\frac{n}{2}$.\\
(2) For $0\le \gamma<n$,  it is easy to check
\begin{align*}
  \int_{B(x,R)} d(x,y)^{-\gamma} d\mu(y) & =\sum_{j\ge 0} \int_{B(x, 2^{-j} R)\backslash B(x, 2^{-j-1} R) }  d(x,y)^{-\gamma} d\mu(y)  \\
   & \lesssim \sum_{j\ge 0}  2^{(j+1)\gamma} R^{-\gamma} 2^{-jn} R^{n} \lesssim R^{n-\gamma}.
\end{align*}
(3) Similarly we have for $\gamma>n$
\begin{align*}
  \int_{M\backslash B(x,R)} d(x,y)^{-\gamma} d\mu(y) & =\sum_{j\ge 0} \int_{B(x, 2^{j+1} R)\backslash B(x, 2^{j} R) }  d(x,y)^{-\gamma} d\mu(y)  \\
   & \lesssim \sum_{j\ge 0}  2^{-j\gamma} R^{-\gamma} 2^{(j+1)n} R^{n} \lesssim R^{n-\gamma}.
\end{align*}
\end{proof}
The following estimates about the weight functions play an important role in our proof. Note that when $M=\mathbb{R}^n$, the following estimates are essentially known in \cite{GO}, \cite{MYZ}. Now we give the proof on manifolds.
\begin{lemma}
Let $M$ be a $n-$dimensional complete Riemannian manifold with nonnegative Ricci curvature for $n\ge 2$ and  $h(t,x)=\frac{t^{1+\frac{n}{\alpha}}}{(d^2(x,p)+t^{\frac{2}{\alpha}})^{\frac{n+\alpha}{2}}}$ for $x, p \in M$. For $0<\alpha<2$, we have
\begin{equation*}
|(-\Delta)^{\frac{\alpha}{2}} h(t,x)| \lesssim t^{-1} h(t,x), \hspace{1cm} \forall x\in M, ~ t>0.
\end{equation*}
\end{lemma}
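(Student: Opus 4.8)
The plan is to use the heat semigroup subordination formula for the fractional Laplacian,
\[
(-\Delta)^{\frac{\alpha}{2}} h = c_\alpha \int_0^\infty \bigl( h - e^{s\Delta} h \bigr) \, \frac{ds}{s^{1+\frac{\alpha}{2}}},
\]
and to estimate the integrand using the Gaussian bounds \eqref{heat kernel}, \eqref{heat kernel 2} together with the volume hypothesis \eqref{Assumpation2} $V(x,r)\sim r^n$. The idea is to split the $s$-integral at a threshold comparable to $\bigl(d^2(x,p)+t^{2/\alpha}\bigr)$ (equivalently, $s\sim t^{2/\alpha}$ near $x$, but the right scale is set by $h$ itself). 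For small $s$, I would use the second-order Taylor-type bound $|h - e^{s\Delta}h| \lesssim s\,\|\Delta h\|_{\infty,\text{local}}$, controlling $\Delta h$ pointwise by differentiating the explicit distance function via \eqref{radial function} and Assumption \eqref{Assumpation1}; for large $s$, I would bound $|h - e^{s\Delta}h| \le h + e^{s\Delta}h$ and estimate $e^{s\Delta}h(x) = \int_M p_s(x,y) h(s,y)\,d\mu(y)$ directly.

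First I would record the pointwise derivative bounds for $h$ as a function of $x$ through $r=d(x,p)$: writing $h = t^{1+n/\alpha} \phi(r)$ with $\phi(r) = (r^2+t^{2/\alpha})^{-(n+\alpha)/2}$, one computes $\phi'$, $\phi''$ and applies \eqref{radial function}, using $\left|\frac{r}{\sqrt G}\partial_r\sqrt G\right|\le C$ from \eqref{Assumpation1}, to get $|\Delta h(t,x)| \lesssim t^{-2/\alpha} h(t,x)\,\bigl(1 + \tfrac{t^{2/\alpha}}{r^2+t^{2/\alpha}}\bigr) \lesssim t^{-2/\alpha} h(t,x)$, and similarly $|\nabla h| \lesssim t^{-1/\alpha} h(t,x)$ up to harmless factors. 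Then for the short-time part $\int_0^{\tau}$ with $\tau := t^{2/\alpha}$, the bound $|h-e^{s\Delta}h|\lesssim s\sup_{s'\le s}\|\Delta (e^{s'\Delta}h)\|$ and the fact that $e^{s'\Delta}$ does not increase the relevant $L^\infty$-type quantity (or: estimating $\Delta e^{s'\Delta}h = e^{s'\Delta}\Delta h$ and using $\|e^{s'\Delta}\|_{L^\infty\to L^\infty}\le 1$ plus the near-diagonal comparison $h(t,y)\sim h(t,x)$ for $d(x,y)\lesssim \sqrt{s'}\lesssim \sqrt\tau$) gives $\int_0^\tau |h-e^{s\Delta}h|\,s^{-1-\alpha/2}\,ds \lesssim t^{-2/\alpha} h(t,x)\int_0^\tau s^{-\alpha/2}\,ds \sim t^{-1} h(t,x)$ since $\alpha<2$.

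For the long-time part $\int_\tau^\infty$, I would estimate each term separately: $h(t,x)\int_\tau^\infty s^{-1-\alpha/2}\,ds \sim \tau^{-\alpha/2} h(t,x) = t^{-1} h(t,x)$, which is already of the desired form; and for $e^{s\Delta}h$, using \eqref{heat kernel} and \eqref{Assumpation2},
\[
e^{s\Delta}h(t,x) = \int_M p_s(x,y)\, h(t,y)\, d\mu(y) \lesssim s^{-n/2} t^{1+n/\alpha}\int_M \bigl(d^2(x,y)+t^{2/\alpha}\bigr)^{-\frac{n+\alpha}{2}}\, d\mu(y),
\]
where the geodesic triangle inequality and the Gaussian factor let me replace $d(x,p)$ by $d(x,y)$ at the cost of constants; by Lemma 2.2(1) with $\gamma = \frac{n+\alpha}{2} > \frac n2$ this integral is $\lesssim t^{\frac{n-(n+\alpha)}{\alpha}} \cdot (\text{adjust})$ — more precisely $\int_M (t^{2/\alpha}+d^2)^{-(n+\alpha)/2}d\mu \lesssim (t^{1/\alpha})^{n-(n+\alpha)} = t^{-1}$, so $e^{s\Delta}h(t,x)\lesssim s^{-n/2} t^{1+n/\alpha} t^{-1} = s^{-n/2} t^{n/\alpha}$, and $\int_\tau^\infty s^{-n/2} s^{-1-\alpha/2}\,ds \sim \tau^{-n/2-\alpha/2} = t^{-\frac{n+\alpha}{\alpha}}$, giving a contribution $\lesssim t^{n/\alpha} t^{-\frac{n+\alpha}{\alpha}} = t^{-1} = t^{-1}\cdot t^{-n/\alpha}h(t,x)\cdot(\dots)$ — here one checks it is in fact dominated by $t^{-1}h(t,x)$ when $d(x,p)\lesssim t^{1/\alpha}$, and when $d(x,p)\gg t^{1/\alpha}$ one instead exploits the exponential decay $\exp(-cd^2/s)$ against the threshold $s\le$ something to gain the missing powers. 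The main obstacle I anticipate is exactly this bookkeeping in the long-time/large-distance regime: making the split at the correct $s$-threshold (which should depend on $d(x,p)$, not just $t$) so that the Gaussian factor $\exp(-cd^2(x,p)/s)$ supplies the polynomial gain needed to recover the weight $h(t,x)$ rather than merely a bound by $t^{-1}$ times a fixed power of $t$. Handling the crossover cleanly — rather than losing a factor of $\bigl(1+d^2(x,p)t^{-2/\alpha}\bigr)$ somewhere — is where the argument needs care, and is presumably why the authors isolate Lemma 2.2 with its three separate regimes.
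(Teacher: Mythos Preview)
Your approach via the heat-semigroup subordination formula is different from the paper's, and as written it has a genuine gap in the far-field regime $d(x,p)\gg t^{1/\alpha}$, which you yourself flag as ``the main obstacle''.

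Concretely, your long-time estimate of $e^{s\Delta}h(t,x)$ replaces $d(y,p)$ (which is what actually appears in $h(t,y)$) by $d(x,y)$ and then invokes Lemma~2.2(1). That substitution is not justified: the triangle inequality gives no useful lower bound on $d(y,p)$ in terms of $d(x,y)$ when $y$ ranges near $p$. The resulting bound $e^{s\Delta}h(t,x)\lesssim s^{-n/2}t^{n/\alpha}$ has lost all dependence on $d(x,p)$, so after integrating in $s$ you obtain only $\lesssim t^{-1}$, whereas the claim requires $\lesssim t^{-1}h(t,x)\sim t^{n/\alpha}d(x,p)^{-(n+\alpha)}$, which is much smaller. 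Your short-time piece has the same problem: the step ``$\|e^{s'\Delta}\|_{L^\infty\to L^\infty}\le 1$ plus near-diagonal comparison'' controls $e^{s'\Delta}\Delta h$ by $\|\Delta h\|_{L^\infty}\sim t^{-2/\alpha}$, not by $t^{-2/\alpha}h(t,x)$; the off-diagonal tail of the heat kernel still reaches the region near $p$ where $\Delta h$ is largest, and you have not estimated that contribution. Splitting at $\tau=t^{2/\alpha}$ is the wrong scale in this regime; the correct threshold would be $\tau\sim d^2(x,p)+t^{2/\alpha}$, and even then one must separately control the tail against the Gaussian decay to recover the factor $d(x,p)^{-(n+\alpha)}$.

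For comparison, the paper avoids the $s$-integral altogether: it uses the resolvent representation from Pazy to write $(-\Delta)^{\alpha/2}h(t,x)=\int_M K(x,y)\Delta_y h(t,y)\,d\mu(y)$ with a Riesz-type kernel satisfying $|K(x,y)|\lesssim d(x,y)^{2-n-\alpha}$. In the far-field case (Case~II) the key device is a partition of unity $\varphi_1+\varphi_2+\varphi_3=1$ in the variable $d(x,y)$; on the ``middle'' annulus $d(x,y)\sim d(x,p)$ one integrates by parts to move $\Delta_y$ from $h$ onto $K\varphi_2$, and then invokes the derivative heat-kernel bounds \eqref{heat kernel 2}, \eqref{Assumpation3} to control $\Delta_y K$ and $\nabla_y K$ pointwise. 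This integration-by-parts step is exactly what produces the missing decay in $d(x,p)$, and it is the idea your sketch lacks.
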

\begin{proof}
To start with, we have by \cite{Pazy}(p.72, Theorem 6.9)
\begin{align*}
  |(-\Delta)^{\frac{\alpha}{2}} h(t,x)| & =\frac{\sin \frac{\alpha}{2}\pi}{\pi} \int_0^\infty s^{\frac{\alpha}{2}-1} (-\Delta)( sI -\Delta )^{-1} h(t, x) ds   \\
   & = -\frac{\sin \frac{\alpha}{2}\pi}{\pi} \int_0^\infty s^{\frac{\alpha}{2}-1} ( sI -\Delta )^{-1} \Delta h(t, x) ds \\
   & = \int_M K(x, y) \Delta_y h(t, y) d\mu(y),
\end{align*}
where $K(x, y)$ is the integral kernel of $ -\frac{\sin \frac{\alpha}{2}\pi}{\pi} \int_0^\infty s^{\frac{\alpha}{2}-1} ( sI -\Delta )^{-1} ds$ \\
By Lemma 2.1 we have
$$ \frac{t^{1+\frac{n}{\alpha}}}{[d(x,p)^2+t^{\frac{2}{\alpha}}]^{\frac{n+\alpha}{2}}} =\frac{1}{[1+\left(\frac{d(x,p)}{t^{\frac{1}{\alpha}}}\right)^2]^{\frac{n+\alpha}{2}}} \sim \left[ 1\wedge \left(\frac{t^{\frac{1}{\alpha}}}{d(x,p)} \right)^2 \right]^{\frac{n+\alpha}{2}}.  $$
Thus it is sufficient to prove
\begin{equation}\label{integral form lemma}
  \left| \int_M K(x, y) \Delta_y h(t, y)   d\mu(y) \right| \lesssim t^{-1} \left[ 1\wedge \left(\frac{t^{\frac{1}{\alpha}}}{d(x,p)} \right)^2 \right]^{\frac{n+\alpha}{2}} .
\end{equation}
Indeed, we have to distinguish two cases.\\
\textbf{Case I: }$d(x,p)\le t^{\frac{1}{\alpha}}.$ \\
Note that in local coordinate, we have by \eqref{radial function}
$$ \Delta h(t,y)= \frac{\partial^2 h(t,r)}{\partial r^2} + \frac{n-1}{r} \frac{\partial h(t,r)}{\partial r} + \frac{1}{\sqrt{G}} \frac{\partial \sqrt{G}}{\partial r} \frac{\partial h(t,r)}{\partial r} $$
where $r=d(y,p)$. According to the definition of $h$, we obtain
\begin{equation*}
  \begin{split}
  &\frac{\partial h(t, r)}{ \partial r } = - \frac{(n+\alpha) t^{1+\frac{n}{\alpha}} r }{ [t^{\frac{2}{\alpha}} +r^2 ]^{\frac{n+\alpha}{2}+1} },  \\
  & \frac{\partial^2 h(t, r)}{ \partial r^2 } =  - \frac{(n+\alpha) t^{1+\frac{n}{\alpha}} }{ [t^{\frac{2}{\alpha}} +r^2 ]^{\frac{n+\alpha}{2}+1} }  +  \frac{(n+\alpha)(n+\alpha+2) t^{1+\frac{n}{\alpha}} r }{ [t^{\frac{2}{\alpha}} +r^2 ]^{\frac{n+\alpha}{2}+2} }.
  \end{split}
\end{equation*}
In turn, we have by \eqref{Assumpation1}
$$ |\Delta h(t,y)| \lesssim \frac{t^{1+\frac{n}{\alpha}} }{[ t^{\frac{2}{\alpha}} +r^2 ]^{\frac{n+\alpha}{2}+1} }. $$
Moreover, by \cite{Pazy}(p.8) and \eqref{heat kernel}, we obtain
\begin{align*}
  \left|\int_M K(x, y) \psi(y) d\mu(y)\right|= & c\left|  \int_0^\infty  s^{\frac{\alpha}{2} -1} (sI- \Delta)^{-1}  \psi  ds \right| \\
          = & c \left| \int_0^\infty s^{\frac{\alpha}{2} -1}  \int_0^\infty  e^{-s\tau} e^{\tau \Delta} \psi d\tau ds    \right| \\
          = & c' \left| \int_0^\infty e^{\tau \Delta} \psi \tau^{-\frac{\alpha}{2}} d\tau    \right| \\
          \lesssim & \int_0^\infty \int_M \frac{\tau^{-\frac{\alpha}{2}}}{V(x, \sqrt{\tau})}  \exp \left( -c\frac{d^2(x,y)}{\tau}  \right) |\psi(y) | d\mu(y) d\tau \\
           \lesssim & \int_M d(x, y)^{2-n-\alpha} |\psi(y)| d\mu(y).
\end{align*}
As a result, it follows
$$  \left| \int_M K(x, y) \Delta_y h(t, y)   d\mu(y) \right| \lesssim  \int_M \frac{t^{1+\frac{n}{\alpha}}}{[ t^{\frac{2}{\alpha}} +d(y,p)^2 ]^{\frac{n+\alpha}{2} +1 }}   d(x,y)^{2-n-\alpha }  d\mu(y) \triangleq I .  $$
Set
$$ I= \int_{d(x,y)\le t^{\frac{1}{\alpha}} } + \int_{d(x,y)> t^{\frac{1}{\alpha}} } \triangleq I_1 + I_2. $$
For $I_1$, we have by Lemma 2.2
$$ I_1\lesssim t^{1+\frac{n}{\alpha}-\frac{2}{\alpha}(\frac{n+\alpha}{2}+1)} \int_{d(x,y)\le t^{\frac{1}{\alpha}}}  d(x,y)^{2-n-\alpha }  d\mu(y) \lesssim t^{-1}.  $$
Note that we have used the assumption $0<\alpha<2$.\\
For $I_2$, we obtain
\begin{align*}
  I_2 & \lesssim t^{1+\frac{n}{\alpha} + \frac{1}{\alpha}(2-n-\alpha)} \int_{d(x,y)> t^{\frac{1}{\alpha}} } [ t^{\frac{2}{\alpha}} +d(y,p)^2 ]^{-\frac{n+\alpha}{2}-1} d\mu(y)  \\
   & \lesssim t^{\frac{2}{\alpha}} \int_M [ t^{\frac{2}{\alpha}} +d(y,p)^2 ]^{-\frac{n+\alpha}{2}-1} d\mu(y) \lesssim t^{-1}.
\end{align*}
We have used Lemma 2.2 in the last step. Thus \eqref{integral form lemma} holds for $d(x, p)\le t^{\frac{1}{\alpha}}.$ \\
\textbf{Case II: }$d(x,p)> t^{\frac{1}{\alpha}}.$\\
Let $\varphi_1(t), \varphi_2(t), \varphi_3(t)$ be the partition of unity, i.e.,
$$  \varphi_1(t)+ \varphi_2(t) + \varphi_3(t) =1, \hspace{0.3cm} \text{and} \hspace{0.3cm} 0\le  \varphi_k(t)\le 1 \hspace{0.5cm} \forall t>0, k=1, 2, 3. $$
$\varphi_1$ is supported in $0<t < \frac{3}{4} d(x, p)$, $\varphi_2$ is supported in $\frac{1}{2}d(x, p) <t < 2 d(x, p)$ and equals 1 for $\frac{3}{4} d(x, p) \le t \le \frac{5}{4} d(x, p)$, $\varphi_1$ is supported in $\frac{5}{4} d(x, p)<t.$ Moreover, they are smooth functions. For more properties of partition of unity we refer the readers to \cite{Hormander}(p.25).\\
Set for $k=1, 2, 3$
$$ \Pi_k= \int_M K(x, y) \varphi_k(d(x, y)) \Delta_y h(t, y)  d\mu(y). $$
For $\Pi_1$, by Lemma 2.2 we obtain
\begin{align*}
 |\Pi_1| &  \lesssim  \int_M d(x,y)^{2-n-\alpha } |  \Delta_y h(t, y) | d\mu(y). \\
& \lesssim t^{1+\frac{n}{\alpha}} [ t^{\frac{2}{\alpha}} +d(x,p)^2 ]^{-\frac{n+\alpha}{2}-1} \int_{d(x,y)\le \frac{3d(x,p)}{4} }  d(x,y)^{2-n-\alpha }  d\mu(y) \\
   & \lesssim t^{1+\frac{n}{\alpha}} d(x ,p)^{-n-2\alpha} \lesssim \frac{t^{\frac{n}{\alpha}}}{d(x ,p)^{n+\alpha}}.
\end{align*}
We have used the fact $d(y,p)\ge d(x,p)-d(x,y) \ge \frac{1}{4} d(x, p) $ in the first inequality. \\
For $\Pi_3$, we have
\begin{align*}
  |\Pi_3| &  \lesssim t^{1+\frac{n}{\alpha}} \int_{d(x,y)> \frac{5d(x,p)}{4} } [ t^{\frac{2}{\alpha}} +d(y,p)^2 ]^{-\frac{n+\alpha}{2}-1}  d(x,y)^{2-n-\alpha }  d\mu(y) \\
  &\lesssim t^{1+\frac{n}{\alpha}} d(x,p)^{-n-\alpha} \int_{d(x,y)> \frac{5d(x,p)}{4} } [ t^{\frac{2}{\alpha}} +d(y,p)^2 ]^{-\frac{n+\alpha}{2}-1}  d(x,y)^{2 }  d\mu(y) \\
   & \lesssim t^{1+\frac{n}{\alpha}} d(x,p)^{-n-\alpha} (J_1+J_2),
\end{align*}
where
$$ J_1=\int_{d(y,p)\le 2d(x,p) \bigcap d(x,y)> \frac{5d(x,p)}{4} } [ t^{\frac{2}{\alpha}} +d(y,p)^2 ]^{-\frac{n+\alpha}{2}-1}  d(x,y)^{2 }  d\mu(y),  $$
$$ J_2= \int_{d(y,p)> 2d(x,p) \bigcap d(x,y)> \frac{5d(x,p)}{4} } [ t^{\frac{2}{\alpha}} +d(y,p)^2 ]^{-\frac{n+\alpha}{2}-1}  d(x,y)^{2 }  d\mu(y). $$
Note that for $J_1$ the following holds
\begin{align*}
  J_1 & \lesssim d(x,p)^2 \int_{B(p, 2d(x,p)) } d(x, p)^{-n-\alpha-2 } d\mu(y) \\
      & \lesssim d(x,p)^{-\alpha},
\end{align*}
where we have used the facts $d(x, y)\le d(x, p)+ d(y, p)\le 3 d(x, p)$ and $d(y, p) \ge d(x, y) - d(x, p) \ge \frac{1}{4} d(x, p)$.  \\
Moreover
\begin{align*}
  J_2 & \lesssim  \int_{M\backslash B(p, 2d(x,p))}  d(y, p)^{-n-\alpha-2 } d(y,p)^2 d\mu(y) \\
      & \lesssim d(x,p)^{-\alpha}.
\end{align*}
We have used the facts $d(x, y)\le d(x, p)+ d(y, p)\le \frac{3}{2} d(y, p). $ As a result, we obtain
$$ |\Pi_3|\lesssim \frac{t^{\frac{n}{\alpha}}}{d(x ,p)^{n+\alpha}}. $$
Now we are ready to deal with $\Pi_2$. To this end we need the integration by parts.
\begin{align*}
  \Pi_2 = &  \int_M K(x, y) \varphi_2(d(x, y)) \Delta_y h(t, y) d\mu(y) \\
        = &  \int_M \Delta_y [ K(x, y) \varphi_2(d(x, y)) ] h(t, y) d\mu(y) \\
        = & \int_M [ \Delta_yK(x, y) \varphi_2(d(x, y)) + 2\langle \nabla_y K(x, y), \nabla_y \varphi_2(d(x, y))\rangle_g \\
          & +  K(x, y) \Delta_y \varphi_2(d(x, y)) ] h(t, y) d\mu(y)
\end{align*}
where $\langle \cdot \rangle_g$ is the Riemannian metric on the tangent spaces. \\
As above, we have by  \eqref{heat kernel 2}
\begin{align*}
 & \left| \int_M \Delta_yK(x, y) h(t,y)  d\mu(y) \right| \\
  = &  c \left| \int_0^\infty \int_M \tau^{-\frac{\alpha}{2}} \Delta_y p_\tau(x, y) h(t,y)  d\mu(y) d\tau  \right| \\
        \lesssim  &  \int_M \int_0^\infty  \frac{\tau^{-\frac{\alpha}{2}-1}}{V(x, \sqrt{\tau})}  \exp \left( -c\frac{d^2(x,y)}{\tau}  \right) d\tau |h(t,y)| d\mu(y)\\
      \lesssim    &  \int_M d(x, y)^{-n-\alpha}  |h(t,y)| d\mu(y).
\end{align*}
It follows that
\begin{align*}
   & \left| \int_M [ \Delta_yK(x, y) \varphi_2(d(x, y))] h(t, y) d\mu(y) \right| \\
  \lesssim &   t^{1+\frac{n}{\alpha}} \int_{\frac{d(x,p)}{2}\le d(x,y)\le 2d(x, p) } [ t^{\frac{2}{\alpha}} +d(y,p)^2 ]^{-\frac{n+\alpha}{2}}  d(x,y)^{-n-\alpha }  d\mu(y)  \\
     \lesssim    &   t^{1+\frac{n}{\alpha}} d(x,p)^{-n-\alpha }  \int_{\frac{d(x,p)}{2}\le d(x,y)\le 2d(x, p) } [ t^{\frac{2}{\alpha}} +d(y,p)^2 ]^{-\frac{n+\alpha}{2}}  d\mu(y)\\
     \lesssim & t^{1+\frac{n}{\alpha}} d(x,p)^{-n-\alpha } \left(\int_{E_1} [ t^{\frac{2}{\alpha}} +d(y,p)^2 ]^{-\frac{n+\alpha}{2}}  d\mu(y)  +\int_{E_2} [ t^{\frac{2}{\alpha}} +d(y,p)^2 ]^{-\frac{n+\alpha}{2}}  d\mu(y)\right)
\end{align*}
where $E_1=B(p, \frac{1}{2}d(x, p))$ and $E_2= \{y| \frac{d(x,p)}{2}\le d(x,y)\le 2d(x, p)\} \backslash B(p, \frac{1}{2}d(x, p)). $
Note that
\begin{align*}
   & \int_{E_1} [ t^{\frac{2}{\alpha}} +d(y,p)^2 ]^{-\frac{n+\alpha}{2}}  d\mu(y) \\
  \lesssim  & \int_{d(y, p)\le \frac{t^{\frac{1}{\alpha}}}{2} } [ t^{\frac{2}{\alpha}} +d(y,p)^2 ]^{-\frac{n+\alpha}{2}}  d\mu(y) + \int_{\frac{t^{\frac{1}{\alpha}}}{2}\le  d(y, p)\le \frac{d(x, p)}{2} } [ t^{\frac{2}{\alpha}} +d(y,p)^2 ]^{-\frac{n+\alpha}{2}}  d\mu(y) \\
  \lesssim & t^{-1} + \int_{\frac{t^{\frac{1}{\alpha}}}{2}\le  d(y, p)} [ t^{\frac{2}{\alpha}} +d(y,p)^2 ]^{-\frac{n+\alpha}{2}}  d\mu(y) \lesssim t^{-1}.
\end{align*}
On the other hand, since $d(y, p)\ge \frac{1}{2} d(x, p)$ for $y\in E_2$, we have
$$ \int_{E_1} [ t^{\frac{2}{\alpha}} +d(y,p)^2 ]^{-\frac{n+\alpha}{2}}  d\mu(y) \lesssim d(x, p)^{-n-\alpha} V(x, 2d(x, p)) \lesssim d(x, p)^{-\alpha}.  $$
As a result, we conclude
$$ \left| \int_M [ \Delta_yK(x, y) \varphi_2(d(x, y))] h(t, y) d\mu(y) \right| \lesssim \frac{t^{\frac{n}{\alpha}}}{d(x, p)^{n+\alpha}}.  $$
To proceed we need the following estimates:
\begin{align*}
 & \left| \int_M \langle \nabla_yK(x, y),  \nabla_y \varphi_2(d(x, y)) \rangle_g h(t,y)  d\mu(y) \right| \\
  = &  c \left| \int_0^\infty \int_M \tau^{-\frac{\alpha}{2}} \langle \nabla_y p_\tau (x, y),  \nabla_y \varphi_2(d(x, y)) \rangle_g  h(t,y)  d\mu(y) d\tau  \right| \\
        \lesssim  &  \int_{E_3} \int_0^\infty  \frac{\tau^{-\frac{\alpha}{2}-\frac{1}{2}}}{V(x, \sqrt{\tau})}  \exp \left( -c\frac{d^2(x,y)}{\tau}  \right) d\tau d(x, p)^{-1}  |h(t,y)| d\mu(y)\\
      \lesssim    & d(x, p)^{-1}  \int_{E_3} d(x, y)^{1-n-\alpha}  |h(t,y)| d\mu(y)
\end{align*}
where $E_3= \frac{d(x,p)}{2}\le d(x,y)\le \frac{3d(x, p)}{4} \bigcup \frac{5d(x, p)}{4} \le d(x,y)\le 2 d(x, p) $.  We have used assumption \eqref{Assumpation3} and the facts $|\varphi_2'(d(x, y))|\le c d(x,p)^{-1}$. See for example \cite{Hormander} for the properties of cut-off functions.\\
Then we have
\begin{align*}
 & \left| \int_M \langle \nabla_yK(x, y),  \nabla_y \varphi_2(d(x, y)) \rangle_g h(t,y)  d\mu(y) \right| \\
      \lesssim    &  t^{1+\frac{n}{\alpha}} d(x, p)^{-1} \int_{E_3} [ t^{\frac{2}{\alpha}} +d(y,p)^2 ]^{-\frac{n+\alpha}{2}}  d(x, y)^{1-n-\alpha}   d\mu(y)
      \\
      \lesssim  & t^{1+\frac{n}{\alpha}} d(x, p)^{-(n+\alpha)} \int_{E_3} [ t^{\frac{2}{\alpha}} +d(y,p)^2 ]^{-\frac{n+\alpha}{2}}  d\mu(y)
      \\
      \lesssim &  t^{1+\frac{n}{\alpha}} d(x, p)^{-(n+\alpha)} \cdot t^{-1}
      \\
      \lesssim & \frac{t^{\frac{n}{\alpha}}}{d(x, p)^{n+\alpha}}.
\end{align*}
Similarly we have
\begin{align*}
 & \left| \int_M K(x, y) \Delta_y \varphi_2(d(x, y))  h(t,y)  d\mu(y) \right| \\
  = &  c \left| \int_0^\infty \int_M  \tau^{-\frac{\alpha}{2}} p_\tau (x, y)  \Delta_y \varphi_2(d(x, y))   h(t,y)  d\mu(y) d\tau  \right| \\
        \lesssim  &  \int_{E_3} \int_0^\infty  \frac{\tau^{-\frac{\alpha}{2}}}{V(x, \sqrt{\tau})}  \exp \left( -c\frac{d^2(x,y)}{\tau}  \right) d\tau d(x, p)^{-2}  |h(t,y)| d\mu(y)\\
      \lesssim    & d(x, p)^{-2}  \int_{E_3} d(x, y)^{2-n-\alpha}  |h(t,y)| d\mu(y).
\end{align*}
$E_3$ is as above and we have used the facts that  in local coordinates
$$ \Delta_y \varphi_2(r)= \varphi''_2(r) + \frac{n-1}{r} \varphi'_2(r) + \frac{1}{\sqrt{G}} \frac{\partial \sqrt{G}}{\partial r} \varphi'_2(r) $$
where $r=d(x, y)$. Hence we have $ |  \Delta_y \varphi_2(r) |\le c d(x, p)^{-2} $ for $y\in E_3$. Thus we obtain
\begin{align*}
 & \left| \int_M K(x, y) \Delta_y \varphi_2(d(x, y))  h(t,y)  d\mu(y) \right| \\
      \lesssim    &  t^{1+\frac{n}{\alpha}} d(x, p)^{-2} \int_{E_3} [ t^{\frac{2}{\alpha}} +d(y,p)^2 ]^{-\frac{n+\alpha}{2}}  d(x, y)^{2-n-\alpha}   d\mu(y)
      \\
      \lesssim & t^{1+\frac{n}{\alpha}} d(x, p)^{-(n+\alpha)}\int_{E_3} [ t^{\frac{2}{\alpha}} +d(y,p)^2 ]^{-\frac{n+\alpha}{2}}  d\mu(y)
      \\
      \lesssim & t^{1+\frac{n}{\alpha}} d(x, p)^{-(n+\alpha)} t^{-1}
      \\
      \lesssim & \frac{t^{\frac{n}{\alpha}}}{d(x, p)^{n+\alpha}}.
\end{align*}
As a result, we have proved
$$ |\Pi_2|\lesssim \frac{t^{\frac{n}{\alpha}}}{d(x, p)^{n+\alpha}}. $$
Combining these estimates, we have proved the desired results.
\end{proof}

\section{\textbf{Proof of Theorem 1.1}}
For each $\phi_0, \phi_1 \in H^{[\frac{n}{2}]+1}$, there is a positive number $T_0$ and a unique solution $u \in C([0,T_0]; H^{[\frac{n}{2}]+1})$ for Cauchy's problem \eqref{NFSE}.
Let
\begin{equation*}
	u(t,x)=w(t,x)+ \mathrm{i}v(t,x),
\end{equation*}
and take the imaginary part of the nonlinear fractional Schr\"odinger equation \eqref{NFSE}, we have
\begin{eqnarray}
	\left
	\{
	\begin{array}{l}
		w_t - (-\Delta)^{\frac{\alpha}{2}} v=(w^2+v^2)^{\frac{p}{2}}\\
		t=0:w=\varphi_0(x).
		
	\end{array}
	\right.
\end{eqnarray}
Now fix a point $p\in M$. For $0<\alpha<2$, let $h(t,x)=\frac{t^{1+\frac{n}{\alpha}}}{(d^2(x,p)+t^{\frac{2}{\alpha}})^{\frac{n+\alpha}{2}}}$ and consider the following function,
\begin{equation}\label{phi}
	\phi(t)=\int_M h(T, x) w(t,x)d\mu(x),
\end{equation}
where $T=t+N$ and $N$ will be determined later. To start with, we have
\begin{align*}
	\phi'(t) &= \int_M h(T, x) w_t d\mu(x) + \int_M \partial_t h(T, x)w d\mu(x) \\
	& = \int_M h(T,x)((-\Delta)^{\frac{\alpha}{2}}v+(w^2+v^2)^{\frac{p}{2}}) d\mu(x) + \int_M \partial_t h(T, x)w d\mu(x).
\end{align*}
Note that $(-\Delta)^{\frac{\alpha}{2}}$ is self-adjoint. It follows
$$ \phi'(t)= \int_M \partial_t h(T, x)w + (-\Delta)^{\frac{\alpha}{2}}h(T,x)v d\mu(x) + \int_M h(T,x)(w^2+v^2)^{\frac{p}{2}} d\mu(x). $$
Now consider
$$ I= \int_M \partial_t h(T, x)w + (-\Delta)^{\frac{\alpha}{2}}h(T,x)v d\mu(x). $$
Note that
\begin{align*}
	|\partial_t h(T,x)| & = \left| (1+\frac{n}{\alpha}) \frac{T^{\frac{n}{\alpha}}}{(d^2(x,p)+T^{\frac{2}{\alpha}})^{\frac{\alpha+n}{2}}} - (1+\frac{n}{\alpha})\frac{T^{\frac{n+2}{\alpha}}}{(d^2(x,p)+T^{\frac{2}{\alpha}})^{\frac{\alpha+n}{2}+1}} \right| \\
	& \lesssim  \frac{1}{T} h(T,x).
\end{align*}
By Lemma 2.3, we have
\begin{align*}
	|I| & \lesssim \int_M T^{-1} h(T,x)(|w|+|v|)d\mu(x) \\
	& \lesssim T^{-1} \left( \int_M h(T,x) (w^2+v^2)^{\frac{p}{2}} d\mu(x) \right)^{\frac{1}{p}} \left( \int_M h(T,x) d\mu(x) \right)^{\frac{1}{p'}}\\
	& \lesssim T^{\frac{n}{\alpha p'}-1} \left( \int_M h(T,x) (w^2+v^2)^{\frac{p}{2}} d\mu(x)  \right)^{\frac{1}{p}},
\end{align*}
where we have used the  H\"older inequality in the second step. As a result, we have
$$ |I| \le \frac{1}{2} \int_M h(T,x)(w^2+v^2)^{\frac{p}{2}}d\mu(x) + C T^{\frac{n}{\alpha}-p'}.  $$
In turn, we obtain
$$ \phi'(t) \ge \frac{1}{2} \int_M h(T,x)(w^2+v^2)^{\frac{p}{2}}d\mu(x) - C T^{\frac{n}{\alpha}-p'}. $$
On the other hand,
\begin{align*}
	|\phi(t)| & \le \left(\int_M h(T,x) |w|^p d\mu(x) \right)^{\frac{1}{p}}
	\left(\int_M h(T,x)d\mu(x) \right)^{\frac{1}{p'}} \\
	& \lesssim T^{\frac{n}{\alpha p'}} \left( \int_M h(T,x) (w^2+v^2)^{\frac{p}{2}} d\mu(x) \right)^{\frac{1}{p}}.
\end{align*}
Then we obtain
$$ \phi'(t)\geq C\frac{|\phi(t)|^p}{T^{\frac{n}{\alpha}(p-1)}}-CT^{\frac{n}{\alpha}-p'}.$$
Therefore we have
$$ \phi(t) \ge \int_M h(N,x) \varphi_0(x)d\mu(x) - CN^{\frac{n}{\alpha}-p'+1}
+C\int_0^{t}\frac{|\phi(\tau)|^p}{(\tau+N)^{\frac{n}{\alpha}(p-1)}}d\tau. $$
Note that $\frac{n}{\alpha}-p' +1 <0$ whenever $p<1+\frac{\alpha}{n}.$ By the dominated convergence theorem, the following holds,
$$ \lim_{N\rightarrow \infty} \int_M h(N,x) \varphi_0(x)d\mu(x) = \int_M \varphi_0(x)d\mu(x). $$
Thus for $N$ large enough, we conclude that
\begin{equation}\label{fi0}
	\phi(t) \ge \frac{1}{2} \int_M \varphi_0(x)d\mu(x) +C\int_0^{t}\frac{|\phi(\tau)|^p}{(\tau+N)^{\frac{n}{\alpha}(p-1)}}d\tau.
\end{equation}
Denote
\begin{equation}\label{fi}
	\varphi(t):= \frac{1}{2} \int_M \varphi_0(x)d\mu(x) +C\int_0^{t}\frac{|\phi(\tau)|^p}{(\tau+N)^{\frac{n}{\alpha}(p-1)}}d\tau.
\end{equation}
It follows
$$ \varphi'(t) \ge C \frac{\varphi^p(t)}{(t+N)^{\frac{n}{\alpha}(p-1)}}. $$
Finally we get
$$ \varphi(t) \ge C [\varphi^{1-p}(0)+N^{1-\frac{n}{\alpha}(p-1)} - (t+N)^{1-\frac{n}{\alpha}(p-1)} ]^{\frac{-1}{p-1}}. $$
Set $$ t_*= (N^{1-\frac{n}{\alpha}(p-1)}+\varphi(0)^{1-p})^{\frac{1}{1-\frac{n}{\alpha}(p-1)}}-N.$$
Hence, $$\varphi(t) \geq C \left( (t_*+N)^{1-\frac{n}{\alpha}(p-1)}-(t+N)^{1-\frac{n}{\alpha}(p-1)} \right)^{-\frac{1}{p-1}}.$$
Since $1-\frac{n}{\alpha}(p-1)>0$ when $1<p<1+\frac{\alpha}{n}$,
then $$\left( (t_*+N)^{1-\frac{n}{\alpha}(p-1)}-(t+N)^{1-\frac{n}{\alpha}(p-1)} \right)^{-\frac{1}{p-1}} \rightarrow +\infty, \ \mathrm{if} \ t\rightarrow t_*^{-}.$$
Therefore, $$\varphi(t)\rightarrow +\infty, \mathrm{if} \ t\rightarrow t_*^{-} .$$
Seeing from \eqref{fi0}-\eqref{fi}, then $\phi(t)$ tends to infinity if $t\rightarrow t_*$. A direct calculation tells us
\begin{equation*}
\begin{split}
||h(T,x)||_{L^2(M)} & \lesssim T^{1+\frac{n}{\alpha}} \left( \int_{M} ((d(x,p))^2+T^{\frac{2}{\alpha}})^{-(n+\alpha)}d\mu(x) \right)^{\frac{1}{2}}
\\
& \lesssim T^{-1}.
\end{split}
\end{equation*}
By H\"older's inequality and \eqref{phi}, we derive that
\begin{equation*}
	\phi(t) \leq ||h(T,\cdot)||_{L^2(M)} || w(t,\cdot)||_{L^2(M)}.
\end{equation*}
As a result, $$||w(t,\cdot)||_{L^2_x(M)} \geq T\phi(t).$$
Therefore, $||w(t,\cdot)||_{L^2_x(M)}$ tends to infinity if $t\rightarrow t_*$, and $T_0<t_*$. Hence we have proved the theorem.

\section{\textbf{Appendix}}
Note that when $M$ is a rotationally symmetric manifold with nonnegative Ricci curvature, \eqref{Assumpation1} holds automatically. In fact, the Riemannian metric can be expressed as
$$g= dr^2+ \phi(r)^2 \tilde{g}_{\alpha \beta} (\theta) d\theta^\alpha d\theta^\beta, $$
where $\phi(0)=0, \phi'(0)=1, \phi(r)>0$ for every $r>0$ and $\tilde{g}$ is the standard metric on sphere. Thus we have $\sqrt{G}=(\frac{\phi(r)}{r})^{n-1} \sqrt{\det \tilde{g}}$. According to the nonnegative of Ricci curvature, we have $\frac{\phi''(r)}{\phi(r)}\le 0$ (see $\S 3.2.3$ of \cite{Petersen}). Then the facts $\phi(r)>0$ and $\phi'(0)=1$ give that $\phi'(r)$ is a decreasing function with $0\le \phi'(r)\le 1$ for $r>0$. Thus we have
$$ \phi'(r)\le \frac{1}{r} \int_0^r \phi'(s) ds = \frac{\phi(r)}{r}. $$
It is direct to check that $ \frac{r}{\sqrt{G}} \frac{\partial \sqrt{G}}{\partial r} =(n-1)\frac{r\phi'(r)-\phi(r)}{\phi(r)}  $ and hence \eqref{Assumpation1} holds.

Note also that by our proof the range of $p$ in Theorem 1.1 is determined by the volume growth of geodesic balls on manifolds and  $\frac{\alpha}{n} \rightarrow 0$ as $n\rightarrow \infty$. Thus it explains why the blow-up results of Schr\"odinger equations with polynomial nonlinearities on hyperbolic spaces should not be expected. As is known, hyperbolic spaces have the Ricci curvature -1 and the volume of balls grow exponentially.
\section{Acknowledgement}
The authors would like to thank the anonymous referee for his/her careful reading and valuable suggestions. The authors would like to express the gratitude to Prof. Yi Zhou for his helpful discussions and advice. The second author would also thank Prof. Changxing Miao and Prof. Jiqiang  Zheng for valuable recommendations. The first author is supported by the State Scholarship Fund of China Scholarship Council ( No.201808430121); Hunan Provincial Key Laboratory of Intelligent Processing of Big Data on Transportation, Changsha University of Science and Technology, Changsha, 410114, China; and the Swedish Research Council under grant no. 2016-06596 while the second author was in residence
at Institut Mittag-Leffler in Djursholm, Sweden during the year of 2019. She also thanks to Prof. Lars Andersson, for his hospital invitation and much help at Institut Mittag-Leffler. The second author is supported by the National Natural Science Foundation of China under Grant No.11901407.

\bibliographystyle{amsplain}

\end{document}